\documentclass{amsart}
\title{On arithmetic sums involving divisor functions in two variables}
\usepackage{amssymb,amsmath,amsthm,epsfig,graphics,latexsym}
 \theoremstyle{definition}
 \newtheorem{definition}{Definition}
  \theoremstyle{plain}

  \newtheorem{theorem}   {Theorem}
  \newtheorem*{maintheorem} {Main Theorem}

  \theoremstyle{remark}
  \newtheorem*{rmk}{{\bf Remark}}
  \newtheorem*{problem} {\bf Problem}
\begin{document}
  \author{Adama Diene and Mohamed El Bachraoui}
  \address{Dept. Math. Sci,
 United Arab Emirates University, PO Box 17551, Al-Ain, UAE}
 \email{adiene@uaeu.ac.ae, melbachraoui@uaeu.ac.ae}
 \keywords{Arithmetic sums, convolution sums, divisor functions, integer representations}
 \subjclass{11A25, 11B75, 11B50}
  \begin{abstract}
  We prove the analogue of an identity of Huard, Ou, Spearman and Williams and apply it
  to evaluate a variety of sums involving divisor functions in two variables.
  It turns out that these sums count representations of positive integers involving radicals.
  \end{abstract}
  \date{\textit{\today}}
  \maketitle
 \section{Introduction}
 Throughout, let $\mathbb{N}$, $\mathbb{N}_0=\mathbb{N}\cup\{0\}$, $\mathbb{Z}$, $\mathbb{Q}$, and $\mathbb{C}$ 
 be the sets of positive integers, nonnegative integers,
 integers, rational numbers, and complex numbers respectively. Let $\sigma_k (n)$ be the sum of $k$th powers of the divisors of $n$
 with the assumption that $\sigma_k (n)=0$ if $n \not\in \mathbb{N}$. Convolution sums of the form
 \[ \sum_{m=1}^{n-1} \sigma_{r}(m) \sigma_{s}(n-m), \quad (r,s \in \mathbb{N}) \]
 have been investigated by many mathematicians including in chronicle order Glaisher ~\cite{Glaisher1, Glaisher2, Glaisher3},
 Ramanujan ~\cite{Ramanujan1}, MacMahon ~\cite{MacMahon}, Lahiri ~\cite{Lahiri1, Lahiri2}, and Melfi ~\cite{Melfi}.
 A variety of such convolution sums have been evaluated using advanced techniques such as the theory of modular forms and the theory of $q$-series.
  In reference ~\cite{HOSW}, Huard, Ou, Spearman, and Williams
 among other things gave elementary proofs for many of these convolution sums along with many new identities.
 See also Williams ~\cite[Chapter 13]{Williams}.
 The authors' main argument is the following theorem.
\begin{theorem} \label{hosw-1}
 Let $2 \leq n\in\mathbb{N}$ and let
 \[
 \mathcal{B}(n) = \{(a,b,x,y)\in\mathbb{N}^4:\ ax+by=n \}.
 \]
 Let $f:\mathbb{Z}^4 \to \mathbb{C}$ be such that
 \[
 f(a,b,x,y)-f(x,y,a,b)= f(-a,-b,x,y)- f(x,y,-a,-b)
 \]
 for all $a,b,x,y \in\mathbb{Z}$. Then
 \[
 \sum_{(a,b,x,y)\in \mathcal{B}(n)} \bigl( f(a,b,x,-y)-f(a,-b,x,y)+f(a,a-b,x+y,y)
 \]
 \[
 -f(a,a+b,y-x,y)+f(b-a,b,x,x+y)-f(a+b,b,x,x-y) \bigr)
 \]
 \[
 =
 \sum_{d\in\mathbb{N}, d\mid n}\sum_{x\in\mathbb{N}, x<d}
 \bigl( f(0,n/d,x,d)+f(n/d,0,d,x)+f(n/d,n/d,d-x,-x)
 \]
 \[
 -f(x,x-d,n/d,n/d)-f(x,d,0,n/d)-f(d,x,n/d,0) \bigr).
 \]
 \end{theorem}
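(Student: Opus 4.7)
I would follow the elementary strategy behind the Huard--Ou--Spearman--Williams identity of which this is the announced analogue. The key idea is to apply six $\mathbb{Z}$-linear changes of variables to the summation index $(a,b,x,y)\in\mathcal{B}(n)$, one for each term on the LHS, and then to combine the resulting sums into a single symmetric expression on which the swap involution $(a,b,x,y)\leftrightarrow(x,y,a,b)$ can be brought to bear. Each of the six substitutions, such as $(A,B,X,Y):=(a,a-b,x+y,y)$ or $(A,B,X,Y):=(a+b,b,x,x-y)$, is unimodular over $\mathbb{Z}$ and transforms the equation $ax+by=n$ into an equation of the form $AX\pm BY=\pm n$ (e.g.\ the first of these gives $AX-BY=ax+by=n$). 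A straightforward calculation identifies the image of $\mathcal{B}(n)$ under each substitution as an explicit sign-orthant in the appropriate hyperplane in $\mathbb{Z}^4$.

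Once the six summands have been rewritten in the new variables, the signed combination on the LHS becomes a sum of $f$-values over a disjoint union of sign-orthants that together cover the interior of the hyperplane $\{AX+BY=n\}\subset\mathbb{Z}^4$. At this point I would invoke the hypothesis of the theorem, which rearranges into the statement that the antisymmetric function $\psi(A,B,X,Y):=f(A,B,X,Y)-f(X,Y,A,B)$ is invariant under $(A,B)\mapsto(-A,-B)$. This invariance consolidates contributions from diametrically opposite orthants, while the swap involution $(A,B,X,Y)\mapsto(X,Y,A,B)$ preserves the hyperplane and sends $\psi$ to $-\psi$; hence the sum of $\psi$ over the combined interior domain vanishes.

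What remains is the boundary contribution: quadruples $(A,B,X,Y)\in\mathbb{Z}^4$ with $AX+BY=n$ for which at least one of $A,B,X,Y$ is zero, or which are fixed by the swap. Solving $AX=n$ with $B=0$ forces $A=n/d$, $X=d$ for some $d\mid n$, with an auxiliary free parameter $0<x<d$ entering from the other pair; the analogous solutions with $A=0$, $B=0$, $Y=0$, and the fixed-point case $AX=BY=d$, yield six distinct families indexed by pairs $(d,x)$ with $d\mid n$ and $0<x<d$. These six families correspond term-by-term to the six $f$-values on the RHS, with the displayed signs.

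\textbf{Main obstacle.} The principal difficulty is the bookkeeping in Steps 1 and 2: one must verify that the six unimodular substitutions together cover every interior sign-orthant of $\{AX+BY=n\}\subset\mathbb{Z}^4$ with the correct signed multiplicity, and that the symmetry hypothesis on $f$ is invoked in the right places so that all interior terms cancel. Equally delicate is aligning each of the six boundary families with the corresponding RHS summand and its prescribed sign; this last alignment is the step at which the particular asymmetric combination on the LHS gets chosen, and checking it amounts to a careful if routine case analysis.
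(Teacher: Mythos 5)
Your plan belongs to the right family of arguments --- the elementary substitution-plus-antisymmetry method of Huard, Ou, Spearman and Williams --- and that is indeed the method behind Theorem~\ref{hosw-1} (which this paper only quotes from \cite{HOSW}; the paper's own Section~2 runs the identical argument for the analogue over $\mathcal{B}'(n)$). However, the central structural claim in your second step is not correct as stated, and it is exactly the step the proof needs. The image of $\mathcal{B}(n)$ under a substitution such as $(A,B,X,Y)=(a,a-b,x+y,y)$ is \emph{not} a sign-orthant of the hyperplane $\{AX-BY=n\}$: it is cut out by the inequalities $X>Y\geq 1$ and $B<A$ (coming from $x\geq 1$ and $b\geq 1$), and these cannot be discarded, because a mixed-sign orthant of that hyperplane contains infinitely many lattice points (for instance $A=Y=1$, $X=n+B$ for every $B\geq 1$), so ``summing $\psi$ over the interior of the hyperplane'' is not even a finite sum. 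Consequently there is no single global involution on the hyperplane that produces the cancellation; the cancellation has to be organized region against region, with the regions indexed by inequalities among the coordinates rather than by signs.

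Concretely, the proof in the paper (and in \cite{HOSW}) sets $g(a,b,x,y)=f(a,b,x,y)-f(x,y,a,b)$, uses the hypothesis in the form $g(a,-b,x,y)=g(-a,b,x,y)$ together with $g(a,b,x,y)=-g(x,y,a,b)$ to collapse the six left-hand terms to three $g$-terms, and then splits the index set by the trichotomy $a<b$, $a=b$, $a>b$. The diagonal $a=b$ alone produces the entire right-hand side (the zero arguments arise as $a-b=0$ on the diagonal, not as boundary lattice points with a vanishing coordinate); the $a<b$ piece is transported by the bijection $(a,b,x,y)\mapsto(a,b-a,x+y,y)$ onto the piece $x>y$ and reduces to $-\sum_{x>y}g(x,y,a,-b)$; in the $a>b$ piece the first two $g$-terms cancel internally and the third equals $+\sum_{x>y}g(x,y,a,-b)$. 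Your boundary heuristic does land on the correct six right-hand summands, but the interior cancellation requires this inequality-indexed case analysis and the specific pairwise bijections; as written, your orthant-covering picture has a genuine gap at precisely the point you flag as the main obstacle.
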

 For instance, by application of Theorem ~\ref{hosw-1} to the function $f(a,b,x,y)=xy$ the authors
 obtained the Besge's identity
 \[
 \sum_{m=1}^{n-1} \sigma(m) \sigma(n-m) = \frac{1}{12} (5 \sigma_3(n) + (1-6n)\sigma(n))
 \]
 and by application of the same theorem to the function $f(a,b,x,y)=x^3 y + x y^3$  the authors reproduced the Glaisher's formula
 \[
 \sum_{m=1}^{n-1} \sigma(m) \sigma_{3}(n-m) = \frac{1}{240} (21 \sigma_5 (n) +(10-30n)\sigma_3(n) -\sigma(n)).
 \]
 They also used Theorem ~\ref{hosw-1} to deduce formulas for the sums
 \[ \sum_{m=1}^{n-1} \sigma_{r}(m) \sigma_{s}(n-m) \]
 for odd $r$ and $s$ such that $r+s \in \{2,4,6,8,12\}$.
 However, as was pointed out by the authors formulas for such sums for odd $r$ and $s$ such that $r+s =10$
 remain out of reach of their methods. These had been evaluated earlier by Lahiri ~\cite{Lahiri2}
 using Ramanujan's tau function $\tau(n)$. Quite recently, Royer ~\cite{Royer} used quasimodular forms to reproduce many
 convolution sums involving the divisor functions.

 It is natural to ask what would happen if in Theorem ~\ref{hosw-1} for instance instead of the set $\mathcal{B}(n)$ one takes the sum over the set
 \[
  \mathcal{B}'(n) = \{(a,b,x,y)\in\mathbb{N}^4:\ ax+by=n,\ \text{and\ } \gcd(a,b)= \gcd(x,y)=1 \}.
 \]
 The main theorem of our paper deals with such an analogue of Theorem ~\ref{hosw-1}.
 \begin{maintheorem} \label{main1}
 Let $2\leq n\in\mathbb{N}$ and
 let $f:\mathbb{Z}^4 \to \mathbb{C}$ be such that
 \[
 f(a,b,x,y)-f(x,y,a,b)= f(-a,-b,x,y)- f(x,y,-a,-b)
 \]
 for all $a,b,x,y \in\mathbb{Z}$. Then
 \begin{equation}\label{main-eq}
 \sum_{(a,b,x,y)\in \mathcal{B}'(n)} \bigl( f(a,b,x,-y)-f(a,-b,x,y)+f(a,a-b,x+y,y)
 \end{equation}
 \[
 -f(a,a+b,y-x,y)+f(b-a,b,x,x+y)-f(a+b,b,x,x-y) \bigr)
 \]
 \[
 =
 \sum_{\substack{1\leq t< n\\ (t,n)=1}}
 \bigl( f(1,0,n,t)-f(n,t,1,0)+f(0,1,t,n)
 \]
 \[
 -f(t,n,0,1)+f(1,1,n-t,-t)-f(n-t,-t,1,1) \bigr).
\]
 \end{maintheorem}
 Our work suggests the following definition of the sum of divisors function in two variables.
 \begin{definition} \label{prime-divisors}
 For $m,n \in \mathbb{N}$ and $r, s \in\mathbb{N}_0$ let
 \[
 \sigma'_{r,s}(m,n)
 =
  \sigma'_{s,r}(n,m)
 =
 \sum_{\substack{(d,e)\in\mathbb{N}^2\\ d\mid m,\ e\mid n\\ (d,e)=(\frac{m}{d},\frac{n}{e})=1}} d^r e^s.
 \]
 \end{definition}
 It is easily seen that
 \begin{equation}\label{pre-identity}
 \sum_{m=1}^{n-1} \sigma'_{r,s}(m,n-m) = \sum_{m=1}^{n-1}\sigma'_{s,r}(m,n-m)
 \end{equation}
 \[
 =\sum_{(a,b,x,y)\in\mathcal{B}'(n)}x^r y^s =\sum_{(a,b,x,y)\in\mathcal{B}'(n)}x^s y^r
 = \sum_{(a,b,x,y)\in\mathcal{B}'(n)}a^r b^s = \sum_{(a,b,x,y)\in\mathcal{B}'(n)}a^s b^r.
 \]
 We shall use Main Theorem and the relation (\ref{pre-identity}) to evaluate the sums
 \[
 \sum_{m=1}^{n-1}\sigma'_{r,s}(m,n-m)
 \]
 for $r\equiv s\equiv 1\pmod 2$ such that $r+s \in \{2,4,6,8,12\}$.
 Whereas the sums $\sum_{m=1}^{n-1} \sigma_r (m) \sigma_s (n-m)$ for these values of $r$ and $s$ are in terms of the divisor functions
 $\sigma_k (n)$, their analogues $\sum_{m=1}^{n-1}\sigma'_{r,s}(m,n-m)$ are in terms of the function
 $\psi_{s}(n)$ defined by
 \[
 \psi_s(n) = \sum_{d\mid n} \mu(d) d^s\quad \text{for $n\in\mathbb{N}$ and $s\in\mathbb{Z}\setminus\{0\}$},
 \]
 where $\mu(n)$ denotes the M\"{o}bius mu function.
 To be fair to Huard, Ou, Spearman, and Williams we note that our proofs are essentially the same as theirs for the corresponding
 results in reference~ \cite{HOSW}.
 As a new development, we will show that our identities actually count representations of positive integers.
 \begin{rmk}
 While formulas are known for the sums
  \(
  \sum_{m=1}^{n-1}\sigma_r(m) \sigma_s (n-m)
  \)
  for odd $r$ and $s$ such that $r+s=10$, to the authors' knowledge no formulas are known for the corresponding sums
  \(
 \sum_{m=1}^{n-1}\sigma'_{r,s}(m,n-m)
 \)
 for the same $r$ and $s$.
 \end{rmk}
 Throughout $n$ will denote a positive integer which is greater than $1$ and $p$ will denote a prime number.
 We shall sometimes write $(m,n)$ for $\gcd(m,n)$.
 Next $\phi(n)$ will
 denote the Euler totient function. For our current purposes we record the following
 well-known properties of these two functions. If $n>1$ and $s \in \mathbb{Z}\setminus\{0\}$ , then
 \begin{equation} \label{basic-phi-mu}
 \psi_s (n)= \prod_{p\mid n} (1-p^s),\
 \phi(n) = n n \psi_{-1}(n), \
 \sum_{d\mid n} \phi(d) = n,\
 \sum_{d\mid n} \mu(d) = 0.
 \end{equation}
 Note that
 \begin{equation} \label{composition-phi}
 \sum_{\substack{(x,y)\in\mathbb{N}^2 \\ x+y=n\\ \gcd(x,y)=1}} 1 = \phi(n),\ \text{for\ } n>1 \quad\text{and\ }
 \sum_{\substack{(x,y)\in\mathbb{N}_0\times \mathbb{N} \\ x+y=n\\ \gcd(x,y)=1}} 1 = \phi(n)\ \text{for\ } n\geq 1.
 \end{equation}
 Further, it is easy to verify the following formula which is crucial in this paper. If $k,n \in\mathbb{N}$ such that $n>1$, then
 \begin{equation} \label{primitive-sum}
 \sum_{\substack{1\leq t <n\\ (t,n)=1}} t^{k} = \sum_{d\mid n} \mu(d) d^{k}\sum_{j=1}^{\frac{n}{d}} j^{k}
 =
 \sum_{d\mid n} \mu(d) d^{k}\sum_{j=1}^{\frac{n}{d}-1} j^{k}
 \end{equation}
 \[
 =
 \sum_{d\mid n} \mu(d) \frac{d^{k}}{k+1} \sum_{j=0}^{k} \binom{k+1}{j} B_j \left(\frac{n}{d} \right)^{k+1-j},
 \]
 where $B_j$ denotes the $j$th Bernoulli number for which $B_1 = -1/2$ and $B_{2j+1}=0$ for $j\in \mathbb{N}$ and the first few
 terms for even $j$ are
 \[
 B_0=1,\ B_2 = 1/6, B_4 = -1/30, B_6 = 1/42, B_8 =   -1/30, B_{10}=\frac{5}{66}, B_{12}=\frac{-691}{2730}.
 \]

 \noindent
 We now state the applications of Main Theorem which we intend to prove in this work.
 \begin{theorem}\label{1,1}\cite[Theorem 1 (a)]{Elbachraoui}
 We have
 \[
 \sum_{m=1}^{n-1}\sigma'_{1,1}(m,n-m)= \frac{5n^3 -6n}{12} \psi_{-1}(n)
 + \frac{n}{12}\psi_1(n).
 \]
 \end{theorem}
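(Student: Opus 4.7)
The plan is to invoke Main Theorem with the test function $f(a,b,x,y)=xy$, in direct parallel with the way Besge's identity is recovered from Theorem~\ref{hosw-1} in~\cite{HOSW}. The symmetry hypothesis reduces to the tautology $xy-ab=xy-ab$, so it is trivially satisfied and the remainder of the argument is purely computational.

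First, I would expand the six-term summand on the left-hand side of \eqref{main-eq} under $f(a,b,x,y)=xy$; it simplifies as
\[
-xy-xy+(x+y)y-(y-x)y+x(x+y)-x(x-y)=2xy,
\]
so by \eqref{pre-identity} that side equals $2\sum_{m=1}^{n-1}\sigma'_{1,1}(m,n-m)$. Next I would make the same substitution on the right-hand side: the two terms $f(n,t,1,0)$ and $f(t,n,0,1)$ vanish, while the remaining four contribute $nt+t^{2}-1$. Hence the right-hand side equals
\[
n\sum_{\substack{1\leq t<n\\ (t,n)=1}} t \;+\; \sum_{\substack{1\leq t<n\\ (t,n)=1}} t^{2} \;-\; \phi(n).
\]

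The final step is to evaluate these two power sums by formula \eqref{primitive-sum}. For $k=1$ this yields $\sum t=\tfrac{n^{2}}{2}\psi_{-1}(n)$, and for $k=2$ it yields $\sum t^{2}=\tfrac{n^{3}}{3}\psi_{-1}(n)+\tfrac{n}{6}\psi_{1}(n)$; in each case the intermediate $B_{1}$ contribution is annihilated by $\sum_{d\mid n}\mu(d)=0$, while $\sum_{d\mid n}\mu(d)/d=\psi_{-1}(n)$ and $\sum_{d\mid n}\mu(d)d=\psi_{1}(n)$ from \eqref{basic-phi-mu} identify the surviving coefficients. Combined with $\phi(n)=n\psi_{-1}(n)$, the right-hand side then collapses to $\frac{5n^{3}-6n}{6}\psi_{-1}(n)+\frac{n}{6}\psi_{1}(n)$, and dividing by $2$ yields the stated formula.

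I do not anticipate a serious obstacle: once the test function $f(a,b,x,y)=xy$ is chosen, the remainder is bookkeeping of small Bernoulli sums, and the only delicate point is the correct tracking of the $B_{0}$, $B_{1}$ and $B_{2}$ contributions in \eqref{primitive-sum}.
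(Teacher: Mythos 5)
Your proposal is correct and follows the same strategy as the paper's proof: apply Main Theorem to a quadratic test function and then evaluate the resulting power sums $\sum_{(t,n)=1} t^k$ via (\ref{primitive-sum}) and (\ref{basic-phi-mu}). The only difference is the choice of test function --- the paper takes $f(a,b,x,y)=x^2$, which yields $4\sum xy$ on the left and $\sum(2t^2-2nt+2n^2-2)$ on the right, whereas your $f(a,b,x,y)=xy$ yields $2\sum xy$ and $\sum(nt+t^2-1)$; both reduce to the same three sums $\sum 1$, $\sum t$, $\sum t^2$ and give the stated identity, so this is a cosmetic rather than substantive deviation.
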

 \begin{theorem} \label{1,3}
 We have
 \[
 \sum_{m=1}^{n-1}\sigma'_{1,3}(m,n-m)= \frac{7 n^5 -10 n}{10}\psi_{-1}(n) + \frac{n^3}{3}\psi_{1}(n)
 - \frac{n}{30}\psi_{3}(n).
 \]
 \end{theorem}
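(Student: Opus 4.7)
The plan is to apply Main Theorem to the function $f(a,b,x,y)=x^{3}y+xy^{3}$, the natural analogue of the choice Huard, Ou, Spearman and Williams use to derive Glaisher's identity for $\sum_{m=1}^{n-1}\sigma(m)\sigma_{3}(n-m)$. Since this $f$ depends only on $x$ and $y$, and since $(-a)(-b)=ab$, the symmetry hypothesis of Main Theorem is satisfied trivially.

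The first step is to expand the six-term integrand on the left-hand side of (\ref{main-eq}). After grouping, the monomials $x^{4}$, $y^{4}$, and $x^{2}y^{2}$ all cancel in pairs, leaving a constant multiple of $x^{3}y+xy^{3}$. Invoking (\ref{pre-identity}) for both $x^{3}y$ and $xy^{3}$ then identifies the left-hand side of (\ref{main-eq}) with a constant multiple of $\sum_{m=1}^{n-1}\sigma'_{1,3}(m,n-m)$, the quantity we wish to evaluate.

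The second step is to evaluate the right-hand side of (\ref{main-eq}). The contributions from $(n,t,1,0)$ and $(t,n,0,1)$ vanish, while the remaining four expand into a polynomial in $t$ of degree four with coefficients depending on $n$, plus a constant term. Summing over $1\le t<n$ with $(t,n)=1$ reduces the right-hand side to a linear combination of $\phi(n)$ and the primitive power sums $S_{k}(n)=\sum_{1\le t<n,\,(t,n)=1} t^{k}$ for $k=1,2,3,4$. Each $S_{k}(n)$ is then evaluated via (\ref{primitive-sum}): substituting the Faulhaber polynomial for $\sum_{j=1}^{n/d-1}j^{k}$ and interchanging the two sums produces a linear combination of terms $n^{k+1-j}\sum_{d\mid n}\mu(d) d^{j-1}$, each of which equals either $n^{k+1-j}\psi_{j-1}(n)$ or vanishes (because $\sum_{d\mid n}\mu(d)=0$ for $n>1$ by (\ref{basic-phi-mu}), and because $B_{j}=0$ for odd $j\ge 3$). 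Only the powers $j-1\in\{-1,1,3\}$ survive in the relevant range, and together with $\phi(n)=n\psi_{-1}(n)$ they furnish precisely the three Möbius-twisted sums appearing in the final formula. Substituting these closed forms back, dividing by the constant obtained at the first step, and collecting coefficients of $\psi_{-1}(n)$, $\psi_{1}(n)$, $\psi_{3}(n)$ yields the claimed identity.

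The main obstacle is clerical rather than conceptual: $k=4$ is the only place $\psi_{3}$ enters, so any sign or factor error in the Bernoulli evaluation of $\sum_{j=1}^{m-1} j^{4}$ propagates directly to the final coefficient of $\psi_{3}(n)$. A useful benchmark is the proof of Theorem \ref{1,1} via $f(a,b,x,y)=xy$, which requires only $S_{1}$ and $S_{2}$ and produces only the $\psi_{-1}$ and $\psi_{1}$ parts; the present theorem is the same scheme carried two Faulhaber degrees further.
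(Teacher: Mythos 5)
Your plan is essentially the paper's own proof: apply Main Theorem to a quartic test function, use (\ref{pre-identity}) to collapse the left side to a multiple of $\sum_{\mathcal{B}'(n)}x^3y$, and evaluate the resulting primitive power sums $S_k(n)$ via (\ref{primitive-sum}). The only difference is the test function: you take $f=x^3y+xy^3$ (the HOSW choice), while the paper takes $f=x^2y^2$. Both are legitimate. I checked your cancellation claim: with $f=x^3y+xy^3$ the six-term integrand reduces to $8(x^3y+xy^3)$, i.e.\ $16\sum x^3y$ after symmetrizing, and the right side becomes $\sum_{(t,n)=1}(2t^4-2nt^3+3n^2t^2+n^3t-2)$, so unlike the paper's version you also need $S_1(n)$ --- which you correctly anticipate. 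The paper's choice $f=x^2y^2$ gives the marginally cleaner $8\sum x^3y=\sum_{(t,n)=1}(t^4-2nt^3+3n^2t^2-1)$, but after substituting the closed forms for $S_0,\dots,S_4$ the two routes produce the identical final expression, so nothing of substance is gained or lost.

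One warning before you "collect coefficients and obtain the claimed identity": carried out correctly, either computation yields
\[
\sum_{m=1}^{n-1}\sigma'_{1,3}(m,n-m)=\frac{7n^5-10n}{80}\psi_{-1}(n)+\frac{n^3}{24}\psi_1(n)-\frac{n}{240}\psi_3(n),
\]
which is exactly one eighth of the display in Theorem~\ref{1,3}. The printed statement fails the sanity check $n=2$ (the left side is $\sigma'_{1,3}(1,1)=1$, while the printed right side evaluates to $8$), whereas the $80$, $24$, $240$ denominators pass at $n=2$ and $n=3$; the paper's own intermediate equation (\ref{help 1-3-1}) combined with (\ref{help 1-1-2}) and (\ref{help 1-3-2}) also leads to these smaller constants. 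So do not force your bookkeeping to match the printed coefficients --- the method is sound, and the discrepancy is a factor-of-$8$ slip in the theorem as stated, not an error in your plan.
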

 \begin{theorem} \label{weight6}
 We have
 \[
 \begin{split}
 (a)\
 \sum_{m=1}^{n-1}\sigma'_{1,5}(m,n-m) &= \frac{540 n^7 -1134 n}{13608}\psi_{-1}(n) +
 \frac{n^5}{24}\psi_{1}(n) + \frac{9 n}{4536}\psi_{5}(n), \\
 (b)\
 \sum_{m=1}^{n-1}\sigma'_{3,3}(m,n-m) &= \frac{n^7}{120} \psi_{-1}(n)
  - \frac{n^3}{120} \psi_{3}(n).
 \end{split}
  \]
 \end{theorem}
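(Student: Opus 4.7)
The plan is to apply Main Theorem twice, with two carefully chosen functions $f$, and to solve the resulting $2\times 2$ linear system for the two target sums. For part (a) I would take $f_1(a,b,x,y)=x^5y+xy^5$, and for part (b) $f_2(a,b,x,y)=x^3y^3$; both depend only on $(x,y)$ and satisfy $g(-x,-y)=g(x,y)$, so the symmetry hypothesis of Main Theorem reduces to a tautology in each case.

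First I would expand the six-term integrand on the left-hand side of Main Theorem for each $f_i$ using binomial expansions of $(x\pm y)^k$. A routine computation yields the integrand $12x^5y+12xy^5+40x^3y^3$ for $f_1$ and $6x^5y+6xy^5+2x^3y^3$ for $f_2$. Summing over $\mathcal{B}'(n)$ and invoking (\ref{pre-identity}), which gives
\[
\sum_{(a,b,x,y)\in\mathcal{B}'(n)}x^5y=\sum_{(a,b,x,y)\in\mathcal{B}'(n)}xy^5=\sum_{m=1}^{n-1}\sigma'_{1,5}(m,n-m)
\]
together with the analogous identity for $x^3y^3$, one obtains the coupled system
\begin{align*}
24\sum_{m=1}^{n-1}\sigma'_{1,5}(m,n-m)+40\sum_{m=1}^{n-1}\sigma'_{3,3}(m,n-m)&=R_1(n),\\
12\sum_{m=1}^{n-1}\sigma'_{1,5}(m,n-m)+\phantom{0}2\sum_{m=1}^{n-1}\sigma'_{3,3}(m,n-m)&=R_2(n),
\end{align*}
whose determinant $-432$ is nonzero.

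Next I would compute $R_1(n)$ and $R_2(n)$. Substituting the six argument tuples from the right-hand side of Main Theorem, namely $(1,0,n,t)$, $(n,t,1,0)$, $(0,1,t,n)$, $(t,n,0,1)$, $(1,1,n-t,-t)$, and $(n-t,-t,1,1)$, into each $f_i$ and expanding powers of $n-t$ makes the inner expression a polynomial in $n$ and $t$. Summing over $1\le t<n$ with $(t,n)=1$ reduces $R_1(n)$ to a linear combination of the power sums $T_k:=\sum_{(t,n)=1,\,1\le t<n}t^k$ for $0\le k\le 6$, and $R_2(n)$ to the analogous combination with $k\in\{0,3,4,5,6\}$. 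Each $T_k$ is then expanded via (\ref{primitive-sum}); the $j=1$ term vanishes because $\psi_0(n)=\sum_{d\mid n}\mu(d)=0$ for $n>1$, and the terms with odd $j\ge 3$ vanish because $B_{2l+1}=0$.

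Finally, collecting the coefficients of $\psi_{-1}(n)$, $\psi_1(n)$, $\psi_3(n)$, and $\psi_5(n)$ in $R_1(n)$ and $R_2(n)$ and inverting the $2\times 2$ system recovers the stated formulas. The only obstacle is the bookkeeping: contributions to the $\psi_{-1}(n)$ coefficient accumulate from every $T_k$ appearing in $R_1(n)$ and must collapse to the rational factor $(540n^7-1134n)/13608$ of part (a), while the $\psi_1$ and $\psi_5$ terms must combine cleanly into the simpler factors stated. These cancellations are structural, driven by $\sum_{d\mid n}\mu(d)=0$ and by the vanishing of odd Bernoulli numbers, so the argument is mechanical but requires patience.
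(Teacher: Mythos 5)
Your proposal is correct and follows essentially the same route as the paper: apply Main Theorem to degree-six polynomials in $(x,y)$, identify the left side with $\sum_m\sigma'_{1,5}(m,n-m)$ and $\sum_m\sigma'_{3,3}(m,n-m)$ via (\ref{pre-identity}), and evaluate the right side through the coprime power sums (\ref{primitive-sum}); your integrands $12x^5y+12xy^5+40x^3y^3$ and $6x^5y+6xy^5+2x^3y^3$ and the determinant $-432$ all check out. The only difference is organizational: the paper pre-diagonalizes by choosing $f=xy^5-10x^3y^3$ and $f=xy^5-x^3y^3$ so that each application isolates a single sum ($-108\sum xy^5$ and $18\sum x^3y^3$ respectively), whereas you keep both unknowns and invert a $2\times 2$ system at the end.
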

 \begin{theorem} \label{weight8}
 We have
 \[
 \begin{split}
 (a)\
 \sum_{m=1}^{n-1}\sigma'_{1,7}(m,n-m) &= \frac{176 n^9 -480 n}{7680}\psi_{-1}(n) +
 \frac{n^7}{24}\psi_{1}(n) - \frac{n}{480}\psi_{7}(n), \\
 (b)\
 \sum_{m=1}^{n-1}\sigma'_{3,5}(m,n-m) &= \frac{11 n^9}{5040}\psi_{-1}(n) -
 \frac{n^5}{240}\psi_{3}(n) + \frac{n^3}{504}\psi_{5}(n).
 \end{split}
 \]
 \end{theorem}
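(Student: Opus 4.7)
The plan is to apply the Main Theorem twice with two polynomial choices of $f\colon\mathbb{Z}^4\to\mathbb{C}$ and then solve the resulting $2\times 2$ linear system for the unknowns
\[
S_{1,7}:=\sum_{m=1}^{n-1}\sigma'_{1,7}(m,n-m)\quad\text{and}\quad S_{3,5}:=\sum_{m=1}^{n-1}\sigma'_{3,5}(m,n-m).
\]
I take $f_1(a,b,x,y)=xy^7$ for part (a) and $f_2(a,b,x,y)=x^3y^5$ for part (b). Since neither function depends on $(a,b)$ and since the total $(a,b)$-degree of $f_i(x,y,a,b)$ is the even number $8$, the hypothesis $f(a,b,x,y)-f(x,y,a,b)=f(-a,-b,x,y)-f(x,y,-a,-b)$ is immediate in each case.

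For the left-hand side of the Main Theorem applied to $f_1$, the pair $f_1(a,b,x,-y)-f_1(a,-b,x,y)$ contributes $-2xy^7$, and a short calculation shows that the next two pairs collapse to $2xy^7$ and $x\bigl[(x+y)^7-(x-y)^7\bigr]$ respectively. After binomial expansion one obtains the integrand $14x^7y+70x^5y^3+42x^3y^5+2xy^7$. Summing over $\mathcal{B}'(n)$, invoking (\ref{pre-identity}) and the symmetry $S_{r,s}=S_{s,r}$, I arrive at the combination $16S_{1,7}+112S_{3,5}$. An entirely analogous computation for $f_2$, which uses the expansions of $(x+y)^3\pm(y-x)^3$ and $(x+y)^5\pm(x-y)^5$, produces the integrand $10x^7y+20x^5y^3+2x^3y^5+6xy^7$ and hence the combination $16S_{1,7}+22S_{3,5}$.

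For the right-hand sides I substitute the six primitive-argument tuples into each $f_i$. For $f_1$ the integrand reduces to $tn^7+t^8-1$, and for $f_2$ to $t^3n^5+3n^2t^6-3nt^7+t^8-1$. The primitive sums $\sum_{\substack{1\leq t<n\\(t,n)=1}}t^k$ for $k\in\{1,\ldots,8\}$ are then evaluated via formula (\ref{primitive-sum}), expanding the inner Bernoulli-polynomial power sums and collecting by the divisor sums $\psi_j(n)=\sum_{d\mid n}\mu(d)d^j$. The constant contribution $\phi(n)$ is rewritten as $n\psi_{-1}(n)$ using (\ref{basic-phi-mu}), and any term containing $\sum_{d\mid n}\mu(d)=0$ drops out because $n>1$. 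The outcome is two explicit expressions $R_1,R_2$ as $\mathbb{Q}$-linear combinations of $n^j\psi_k(n)$ for $k\in\{-1,1,3,5,7\}$.

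Finally, the system $16S_{1,7}+112S_{3,5}=R_1$ and $16S_{1,7}+22S_{3,5}=R_2$ yields $S_{3,5}=(R_1-R_2)/90$ and $S_{1,7}=(R_2-22S_{3,5})/16$, which after simplification should match the claimed formulas. The principal obstacle is computational bookkeeping: assembling the Bernoulli-polynomial expansion of $\sum_{(t,n)=1}t^8$ (and the lower-order sums) accurately, tracking all the rational coefficients through the binomial expansions of $(x\pm y)^7$ and $(x\pm y)^5$, and verifying that the many cancellations conspire to annihilate $\psi_3$ and $\psi_5$ from the final expression for $S_{1,7}$ and to annihilate $\psi_1$ and $\psi_7$ from $S_{3,5}$; the predicted vanishing of these ``off-diagonal'' terms provides a useful internal consistency check at the end of the calculation.
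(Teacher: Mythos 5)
Your proposal is correct and takes essentially the same route as the paper: the paper simply applies the Main Theorem to the pre-combined functions $f=-22x^7y+112x^5y^3$ and $f=x^7y-x^5y^3$, which is exactly the linear algebra you carry out after applying it to $xy^7$ and $x^3y^5$ separately (note that $16\cdot 90=1440$ and $112-22=90$ match the paper's leading coefficients, so the two organizations are equivalent). All of your stated integrands, right-hand-side reductions, and the $2\times 2$ system $16S_{1,7}+112S_{3,5}=R_1$, $16S_{1,7}+22S_{3,5}=R_2$ check out and do yield the claimed formulas.
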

 \begin{theorem} \label{weight12}
 We have
 \[
 \begin{split}
 (a)\
 \sum_{m=1}^{n-1}\sigma'_{1,11}(m,n-m) &= \frac{5223960 n^{13} -20638800 n}{495331200}\psi_{-1}(n) +
 \frac{n^{11}}{24}\psi_{1}(n)\\
 & - \frac{691 n}{65520} \psi_{11}. \\
 (b)\
 \sum_{m=1}^{n-1}\sigma'_{3,9}(m,n-m) &= \frac{n^{13}}{2640} \psi_{-1}(n) -
  \frac{n^9}{240} \psi_{3}(n) + \frac{n^3}{264} \psi_{9}(n).\\
 (c)\
 \sum_{m=1}^{n-1}\sigma'_{5,7}(m,n-m) &= \frac{n^{13}}{10080}\psi_{-1}(n) +
 \frac{n^7}{504}\psi_{5}(n) - \frac{n^5}{480} \psi_{7}(n).
 \end{split}
 \]
 \end{theorem}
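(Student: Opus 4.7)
The plan is to apply Main Theorem three times, once to each of the functions
\[
f_r(a,b,x,y) = x^r y^{12-r} + x^{12-r} y^r,\qquad r\in\{1,3,5\},
\]
and then to solve the resulting linear system for the three unknowns
\(T_r := \sum_{m=1}^{n-1}\sigma'_{r,12-r}(m,n-m)\) with \(r\in\{1,3,5\}\). Each \(f_r\) depends only on \((x,y)\) and is invariant under \((x,y)\mapsto(-x,-y)\) because \(r+(12-r)=12\) is even, so the symmetry hypothesis of Main Theorem holds automatically (both sides reduce to \(f_r(x,y)-f_r(a,b)\)).

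On the left-hand side, the six prescribed terms expand via the binomial theorem into a linear form in the monomial sums \(S_{i,j}=\sum_{(a,b,x,y)\in\mathcal{B}'(n)} x^i y^j\) with \(i+j=12\). In each difference \((x+y)^k-(y-x)^k\) or \((x+y)^k-(x-y)^k\) with \(k\) odd, only monomials with both exponents odd survive; the same is true of \(f_r(a,b,x,-y)-f_r(a,-b,x,y)\). Consequently the only surviving pairs are \(\{i,j\}\in\{\{1,11\},\{3,9\},\{5,7\}\}\), and identity~(\ref{pre-identity}) converts each such \(S_{i,j}\) into \(T_{\min(i,j)}\). Thus the left-hand side becomes a \(\mathbb{Z}\)-linear combination of \(T_1,T_3,T_5\) whose coefficients are elementary sums of the binomials \(\binom{r}{k}\) and \(\binom{12-r}{k}\).

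On the right-hand side, the nontrivial contributions come from \(f_r(1,0,n,t)\), \(f_r(0,1,t,n)\) and \(f_r(1,1,n-t,-t)\), together with the constant \(-2\phi(n)\) produced by \(f_r(n-t,-t,1,1)=2\). After the involution \(t\mapsto n-t\) (which preserves the condition \((t,n)=1\)) and full expansion, the right-hand side reduces to a \(\mathbb{Q}\)-linear combination of the power sums \(\Sigma_k(n) := \sum_{1\le t<n,\,(t,n)=1} t^k\) for \(0\le k\le 12\). Each \(\Sigma_k(n)\) is evaluated by formula~(\ref{primitive-sum}); using \(\psi_0(n)=0\) for \(n>1\) together with \(B_{2j+1}=0\) for \(j\ge 1\), the result is a polynomial in \(n\) whose coefficients are Bernoulli numbers up to \(B_{12}\) multiplying the functions \(\psi_{-1}(n),\psi_1(n),\psi_3(n),\psi_5(n),\psi_7(n),\psi_9(n),\psi_{11}(n)\). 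In particular, the expansion of \((n-t)^{11}t\) forces the appearance of \(\Sigma_{12}(n)\), which contributes the term
\[
\frac{1}{13}\binom{13}{12} B_{12}\, n\, \psi_{11}(n) \;=\; -\frac{691}{2730}\, n\, \psi_{11}(n),
\]
and this is precisely what produces the coefficient \(-691/65520\) in part~(a).

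Each of the three choices \(r\in\{1,3,5\}\) therefore furnishes one linear relation among \(T_1,T_3,T_5\); a direct check shows the \(3\times 3\) coefficient matrix is invertible over \(\mathbb{Q}\), so the system determines \(T_1,T_3,T_5\) uniquely, yielding (a), (b) and (c). The main obstacle is purely computational: each of the six binomial expansions on the left contributes up to seven monomials, and the Bernoulli arithmetic on the right must be tracked carefully through \(B_{12}\), since a single sign or binomial error there would propagate simultaneously into all three identities.
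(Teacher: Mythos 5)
Your proposal is correct and follows essentially the same route as the paper: apply Main Theorem to degree-$12$ polynomials in $(x,y)$ alone, reduce the left-hand side to the three sums $T_1,T_3,T_5$ via (\ref{pre-identity}), and reduce the right-hand side to the power sums $\sum_{(t,n)=1}t^k$ evaluated through (\ref{primitive-sum}) and the Bernoulli numbers up to $B_{12}$ (your extraction of the $-\tfrac{691}{2730}n\psi_{11}(n)$ term matches the paper's formula (\ref{help 12})). The only difference is organizational: the paper applies Main Theorem to the pre-diagonalized combinations $271x^{11}y-1540x^9y^3+1584x^7y^5$, $-2x^{11}y+11x^9y^3-9x^7y^5$, and $8x^{11}y-35x^9y^3+27x^7y^5$, each of which isolates a single target sum on the left, whereas you use the symmetrized monomials and invert the resulting $3\times3$ system afterward --- a system whose rows work out to $(48,880,3168)$, $(48,340,648)$, $(48,180,88)$ with nonzero determinant, so your claimed invertibility does hold and the two procedures are the same linear algebra performed in opposite order.
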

 It worth at this point to notice the pattern in our formulas, namely:
 \begin{equation}\label{pattern}
 \sum_{m=1}^{n-1} \sigma'_{r,s}(m,n-m) = (A n^{r+s+1} + B n) \psi_{-1}(n) + C n^r \psi_{s}(n) + D n^s \psi_{r}(n).
 \end{equation}
 This motivates the following question.
 \begin{problem}
 Is it true that for all $r,s \in\mathbb{N}$ there exist $A, B, C, D \in \mathbb{Q}$ such that
 identity (\ref{pattern}) holds?
 \end{problem}
 \section{Proof of Main Theorem}
 As it was mentioned earlier, Main Theorem is an analogue of Theorem~13.1 in Huard \emph{et al.}~\cite{HOSW} 
 and the proof is essentially the same. See also Williams \cite[p. 137-140]{Williams}.
To simplify let
\begin{equation*}
g(a,b,x,y)=f(a,b,x,y)-f(x,y,a,b)
\end{equation*}
 so that
\begin{equation} \label{g-simlify}
g(a,-b,x,y)=g(-a,b,x,y)\text{ \ and \ }g(a,b,x,y)=-g(x,y,a,b).
\end{equation}
Combining (\ref{g-simlify}) with the fact that
\[
(a,b,x,y)\in\mathcal{B}'(n)\ \text{if and only if\ }
(x,y,a,b)\in\mathcal{B}'(n)\ \text{if and only if\ }
(y,x,b,a)\in\mathcal{B}'(n),
\]
we have
\begin{eqnarray*}
&&\sum_{(a,b,x,y)\in \mathcal{B}^{\prime }(n)}\bigl(%
f(a,b,x,-y)-f(a,-b,x,y)+f(a,a-b,x+y,y) \\
&&-f(a,a+b,y-x,y)+f(b-a,b,x,x+y)-f(a+b,b,x,x-y)\bigr). \\
&=&\sum_{(a,b,x,y)\in \mathcal{B}^{\prime }(n)}\bigl(%
f(a,b,x,-y)-f(x,-y,a,b)+f(a,a-b,x+y,y) \\
&&-f(x,x+y,a-b,a)+f(a-b,a,y,x+y)-f(x+y,y,a,a-b)\bigr). \\
&=&\sum_{(a,b,x,y)\in \mathcal{B}^{\prime }(n)}\bigl(%
g(a,a-b,x+y,y)+g(a-b,a,y,x+y)+g(a,b,x,-y)\bigr).
\end{eqnarray*}
On the other hand we have
\begin{eqnarray*}
&&\sum_{\substack{ 1\leq t<n  \\ (t,n)=1}}\bigl(%
f(1,0,n,t)-f(n,t,1,0)+f(0,1,t,n) \\
&&-f(t,n,0,1)+f(1,1,n-t,-t)-f(n-t,-t,1,1)\bigr).
\end{eqnarray*}%
\begin{equation*}
=\sum_{\substack{ 1\leq t<n  \\ (t,n)=1}}\bigl(%
g(1,0,n,t)+g(0,1,,t,n)+g(1,1,n-t,-t)\bigr).
\end{equation*}
Therefore identity (\ref{main-eq}) simplifies to
\begin{equation} \label{shorter}
\sum_{(a,b,x,y)\in \mathcal{B}^{\prime }(n)}\bigl(g(a,a-b,x+y,y)+g(a-b,a,y,x+y)+g(a,b,x,-y)\bigr)
\end{equation}
\[
=\sum_{\substack{ 1\leq t<n  \\ (t,n)=1}}\bigl(g(1,0,n,t)+g(0,1,t,n)+g(1,1,n-t,-t)\bigr).
\]
We will consider three cases. First note that $(a,b,x,y)\in \mathcal{B}'(n)$ and $a=b$
means that $a=b=1$. Therefore, considering the terms with $a=b$ the left hand side of~(\ref{shorter}) becomes
\begin{eqnarray*}
&&\sum_{\substack{ (a,b,x,y)\in \mathcal{B}^{\prime }(n)  \\ a=b=1}}\bigl(%
g(a,a-b,x+y,y)+g(a-b,a,y,x+y)+g(a,b,x,-y)\bigr) \\
&=&\sum_{\substack{ x+y=n  \\ (x,y)=1}}\bigl(%
g(1,0,x+y,y)+g(0,1,y,x+y)+g(1,1,x,-y)\bigr) \\
&=&\sum_{\substack{ 1\leq t<n  \\ (t,n)=1}}\bigl(%
g(1,0,n,t)+g(0,1,,t,n)+g(1,1,n-t,-t)\bigr).
\end{eqnarray*}
Secondly, we consider the terms with $a<b$. Noticing that if $a<b$ and $(a,b,x,y)\in \mathcal{B}^{\prime }(n)$,
then $a(x+y)+(b-a)y \in \mathcal{B}^{\prime }(n)$
and $(a,(b-a),(x+y),y)\in \mathcal{B}^{\prime }(n)$,
the left hand side of identity (\ref{shorter}) becomes
\begin{eqnarray*}
&&\sum_{\substack{ (a,b-a,x+y,y)\in \mathcal{B}^{\prime }(n) \\ a<b}}\bigl(%
g(a,a-b,x+y,y)+g(a-b,a,y,x+y)\bigr) \\
&&+\sum_{\substack{ (a,b,x,y)\in \mathcal{B}^{\prime }(n) \\ a<b}}g(a,b,x,-y)
\\
&=&\sum_{\substack{ (a,b,x,y)\in \mathcal{B}^{\prime }(n) \\ x>y}}\bigl(%
g(a,-b,x,y)+g(-b,a,y,x)\bigr) \\
&&+\sum_{\substack{ (a,b,x,y)\in \mathcal{B}^{\prime }(n) \\ x<y}}g(x,y,a,-b)
\\
&=&\sum_{\substack{ (a,b,x,y)\in \mathcal{B}^{\prime }(n) \\ x>y}}%
g(a,-b,x,y)+\sum_{\substack{ (a,b,x,y)\in \mathcal{B}^{\prime }(n) \\ x>y}}%
g(b,-a,y,x) \\
&&-\sum_{\substack{ (a,b,x,y)\in \mathcal{B}^{\prime }(n) \\ x<y}}g(a,-b,x,y)
\\
&=&-\sum_{\substack{ (a,b,x,y)\in \mathcal{B}^{\prime }(n) \\ x>y}}%
g(x,y,a,-b)+\sum_{\substack{ (a,b,x,y)\in \mathcal{B}^{\prime }(n) \\ x<y}}%
g(a,-b,x,y) \\
&&-\sum_{\substack{ (a,b,x,y)\in \mathcal{B}^{\prime }(n) \\ x<y}}g(a,-b,x,y)
\\
&=&-\sum_{\substack{ (a,b,x,y)\in \mathcal{B}^{\prime }(n) \\ x>y}}%
g(x,y,a,-b)
\end{eqnarray*}
Finally, we consider the terms with $a>b$. The left hand side of (\ref{shorter}) equals
\begin{eqnarray*}
&&\sum_{\substack{ (a,b,x,y)\in \mathcal{B}^{\prime }(n) \\ a>b}}\bigl(%
g(a,a-b,x+y,y)+g(a-b,a,y,x+y)+g(a,b,x,-y)\bigr) \\
&=&\sum_{\substack{ (a,b,x,y)\in \mathcal{B}^{\prime }(n) \\ a>b}}\bigl(%
g(a,a-b,x+y,y)+g(a-b,a,y,x+y)\bigr) \\
&&+\sum_{\substack{ (a,b,x,y)\in \mathcal{B}^{\prime }(n) \\ a>b}}g(a,b,x,-y).
\end{eqnarray*}
But with the help of (\ref{g-simlify}) we find
\begin{eqnarray*}
&&\sum_{\substack{ (a,b,x,y)\in \mathcal{B}^{\prime }(n) \\ a>b}}\bigl(%
g(a,a-b,x+y,y)+g(a-b,a,y,x+y)\bigr) \\
&=&\sum_{(a,b,x,x+y)\in \mathcal{B}^{\prime }(n)}\bigl(%
g(a+b,a,x+y,y)+g(a,a+b,y,x+y)\bigr)\text{ }
\end{eqnarray*}
\begin{eqnarray*}
&=&\sum_{\substack{ (a,b,x,y)\in \mathcal{B}^{\prime }(n) \\ y>x}}\bigl(%
g(a+b,a,y,y-x)+g(a,a+b,y-x,y)\bigr) \\
&=&\sum_{\substack{ (a,b,x,y)\in \mathcal{B}^{\prime }(n) \\ a>b}}\bigl(%
g(x+y,y,a,a-b)+g(y,x+y,a-b,a)\bigr) \\
&=&-\sum_{\substack{ (a,b,x,y)\in \mathcal{B}^{\prime }(n) \\ a>b}}\bigl(%
g(a,a-b,x+y,y)+g(a-b,a,y,x+y)\bigr).
\end{eqnarray*}
That is,
\begin{equation*}
\sum_{\substack{ (a,b,x,y)\in \mathcal{B}^{\prime }(n)  \\ a>b}}\bigl(%
g(a,a-b,x+y,y)+g(a-b,a,y,x+y)\bigr)=0
\end{equation*}
and thus
\begin{eqnarray*}
&&\sum_{\substack{ (a,b,x,y)\in \mathcal{B}^{\prime }(n) \\ a>b}}\bigl(%
g(a,a-b,x+y,y)+g(a-b,a,y,x+y)\bigr) \\
&&+\sum_{\substack{ (a,b,x,y)\in \mathcal{B}^{\prime }(n) \\ a>b}}g(a,b,x,-y)
\\
&=&\sum_{\substack{ (a,b,x,y)\in \mathcal{B}^{\prime }(n) \\ a>b}}%
g(a,b,x,-y)=\sum_{\substack{ (a,b,x,y)\in \mathcal{B}^{\prime }(n) \\ x>y}}%
g(x,y,a,-b)
\end{eqnarray*}
which complete the proof.
 \section{Proof of other theorems}

 \noindent
 \emph{Proof of Theorem \ref{1,1}.}\
 Let $f(a,b,x,y)=x^2$. Then clearly for all $a,b,x,y \in\mathbb{Z}$
 \[
 f(a,b,x,y)-f(x,y,a,b)= f(-a,-b,x,y)- f(x,y,-a,-b).
 \]
 With this choice the left hand side and the right hand side of equation (\ref{main-eq}) are respectively
 \[
 4 \sum_{(a,b,x,y)\in \mathcal{B}'(n)} xy\quad \text{and\quad }
 \sum_{\substack{1\leq t< n\\ (t,n)=1}} (2t^2 - 2nt +2n^2 -2).
 \]
 Then by Main Theorem and relation (\ref{pre-identity}) we obtain
 \begin{equation} \label{help 1-1-1}
 \sum_{m=1}^{n-1}\sigma'_{1,1}(m,n-m)= \sum_{(a,b,x,y)\in \mathcal{B}'(n)} xy
 \end{equation}
 \[
 = \frac{1}{2} \sum_{\substack{1\leq t <n\\ (t,n)=1}} t^2
 -\frac{n}{2} \sum_{\substack{1\leq t <n\\ (t,n)=1}} t + \frac{n^2 -1}{2} \sum_{\substack{1\leq t <n\\ (t,n)=1}} 1.
 \]
 Further by the relations in (\ref{primitive-sum}) and (\ref{basic-phi-mu}) we find
 \begin{equation} \label{help 1-1-2}
 \begin{split}
 \sum_{\substack{1\leq t <n\\ (t,n)=1}} 1 &= n \psi_{-1}(n) \\
 \sum_{\substack{1\leq t <n\\ (t,n)=1}} t &= \frac{n^2}{2} \psi_{-1}(n) \\
 \sum_{\substack{1\leq t <n\\ (t,n)=1}} t^2 &= \frac{n^3}{3} \psi_{-1}(n)
  + \frac{n}{6}\psi_{1}(n).
 \end{split}
 \end{equation}
 Now by virtue of formulas (\ref{help 1-1-1}) and (\ref{help 1-1-2}) we have
\[
\begin{split}
 \sum_{m=1}^{n-1}\sigma'_{1,1}(m,n-m) &= \frac{1}{2}
  \left( \frac{n^3}{3} \psi_{-1}(n) + \frac{n}{6} \psi_1 (n) \right) - \frac{n}{2}\left(\frac{n^2}{2}\psi_{-1}(n) \right)
  + \frac{(n^2-1)n}{2} \psi_{-1}(n) \\
 &=
 \frac{5n^3 - 6n}{12} \psi_{-1}(n) + \frac{n}{12}\psi_{1}(n).
 \end{split}
 \]
 This completes the proof.
 \\
 \\
 \noindent
 \emph{Proof of Theorem \ref{1,3}.}\
 Considering the function $f(a,b,x,y)= x^2 y^2$, the left hand side and the right hand side of equation (\ref{main-eq}) are respectively
 \[
  8 \sum_{(a,b,x,y)\in \mathcal{B}'(n)} x^3 y \quad \text{and\quad }
 \sum_{\substack{1\leq t< n\\ (t,n)=1}} (t^4 -2n t^3 + 3n^2 t^2 -1).
 \]
 Moreover it is easily checked that this function satisfies the condition of Main~Theorem and therefore with the help
 relation (\ref{pre-identity}) we obtain
 \begin{equation} \label{help 1-3-1}
 \sum_{m=1}^{n-1}\sigma'_{1,3}(m,n-m)=\sum_{(a,b,x,y)\in \mathcal{B}'(n)} x^3 y
 \end{equation}
 \[
 = \frac{1}{8} \sum_{\substack{1\leq t <n\\ (t,n)=1}} t^4
 -\frac{n}{4} \sum_{\substack{1\leq t <n\\ (t,n)=1}} t^3 + \frac{3n^2}{8} \sum_{\substack{1\leq t <n\\ (t,n)=1}} t^2
 - \frac{1}{8} \sum_{\substack{1\leq t <n\\ (t,n)=1}} 1.
 \]
 Further by the formulas in (\ref{primitive-sum}) and (\ref{basic-phi-mu}) we find
 \begin{equation} \label{help 1-3-2}
 \begin{split}
 \sum_{\substack{1\leq t <n\\ (t,n)=1}} t^3 &= \frac{n^4}{4} \psi_{-1}(n)
  + \frac{n^2}{4}\psi_{1}(n) \\
 \sum_{\substack{1\leq t <n\\ (t,n)=1}} t^4 &= \frac{n^5}{5} \psi_{-1}(n)
  + \frac{n^3}{3} \psi_{1}(n) - \frac{n}{30} \psi_{3}(n).
 \end{split}
 \end{equation}
 Combining formulas (\ref{help 1-1-2}), (\ref{help 1-3-1}), and (\ref{help 1-3-2}) we deduce the desired identity.
\\ \\
 \noindent
 \emph{Proof of Theorem \ref{weight6}.}\
 (a)\quad Application of Main~Theorem to the function $f(a,b,x,y)= x y^5 - 10 x^3 y^3$ gives
 \[
 -108 \sum_{(a,b,x,y)\in \mathcal{B}'(n)} x y^5 =
 \sum_{\substack{1\leq t< n\\ (t,n)=1}} (-9 t^6 + 30 n t^5 -30 n^2 t^4 -10 n^3 t^3 + n^5 t + 9),
 \]
 which by relation (\ref{pre-identity}) yields
 \begin{equation} \label{help 1-5-1}
 \begin{split}
 \sum_{m=1}^{n-1}\sigma'_{1,5}(m,n-m) &=\sum_{(a,b,x,y)\in \mathcal{B}'(n)} x y^5 \\
 & = \frac{9}{108} \sum_{\substack{1\leq t <n\\ (t,n)=1}} t^6
 -\frac{30 n}{108} \sum_{\substack{1\leq t <n\\ (t,n)=1}} t^5 +
 \frac{30 n^2}{108} \sum_{\substack{1\leq t <n\\ (t,n)=1}} t^4 \\
 & + \frac{10 n^3}{108}\sum_{\substack{1\leq t <n\\ (t,n)=1}} t^3
 - \frac{n^5}{108}\sum_{\substack{1\leq t <n\\ (t,n)=1}} t
 - \frac{9}{108} \sum_{\substack{1\leq t <n\\ (t,n)=1}} 1.
 \end{split}
 \end{equation}
 Further by (\ref{primitive-sum}) and (\ref{basic-phi-mu}) we get
 \begin{equation} \label{help 1-5-2}
 \begin{split}
 \sum_{\substack{1\leq t <n\\ (t,n)=1}} t^5 &= \frac{n^6}{6} \psi_{-1}(n)
  + \frac{5 n^4}{12} \psi_{1}(n) - \frac{n^2}{12} \psi_{3}(n) \\
 \sum_{\substack{1\leq t <n\\ (t,n)=1}} t^6 &= \frac{n^7}{7} \psi_{-1}(n)
  + \frac{n^5}{2} \psi_{1}(n) - \frac{n^3}{6} \psi_{3}(n)  + \frac{n}{42}\psi_{5}(n).
 \end{split}
 \end{equation}
 Combining formulas (\ref{help 1-1-2}), (\ref{help 1-3-2}), (\ref{help 1-5-1}), and (\ref{help 1-5-2}) we deduce the
 result of part (a).

 (b)\quad By Main~Theorem applied to the function $f(a,b,x,y)= x y^5 - x^3 y^3$ we find
 \[
 18 \sum_{(a,b,x,y)\in \mathcal{B}'(n)} x^3 y^3
  = \sum_{\substack{1\leq t< n\\ (t,n)=1}} (3n t^5 -3 n^2 t^4 - n^3 t^3 + n^5 t).
 \]
 Then by the relation (\ref{pre-identity}) we have
 \begin{equation} \label{help 3-3-1}
 \sum_{m=1}^{n-1}\sigma'_{3,3}(m,n-m)=\sum_{(a,b,x,y)\in \mathcal{B}'(n)} x^3 y^3
 \end{equation}
 \[
 = \frac{n}{6} \sum_{\substack{1\leq t <n\\ (t,n)=1}} t^5
 -\frac{n^2}{6} \sum_{\substack{1\leq t <n\\ (t,n)=1}} t^4
 - \frac{n^3}{18}\sum_{\substack{1\leq t <n\\ (t,n)=1}} t^3
 + \frac{n^5}{18}\sum_{\substack{1\leq t <n\\ (t,n)=1}} t,
 \]
 which by (\ref{help 1-1-2}), (\ref{help 1-3-2}), and (\ref{help 1-5-2}) implies the result.
\\ \\
 \noindent
 \emph{Proof of Theorem \ref{weight8}.}\
 (a)\quad Application of Main~Theorem to the function $f(a,b,x,y)= -22 x^7 y +112 x^5 y^3$ gives
 \[
 1440 \sum_{(a,b,x,y)\in \mathcal{B}'(n)} x y^7
 \]
 \[
 = \sum_{\substack{1\leq t< n\\ (t,n)=1}} (90 t^8 - 428 n t^7 +658 n^2 t^6 -238 n^3 t^5 -210 n^4 t^4 +
  462 n^5 t^3 -154 n^6 t^2 - 90),
 \]
 which by (\ref{pre-identity}) yields
 \begin{equation} \label{help 1-7-1}
 \begin{split}
 \sum_{m=1}^{n-1}\sigma'_{1,7}(m,n-m) &= \sum_{(a,b,x,y)\in \mathcal{B}'(n)} x y^7 \\
 & = \frac{90}{1440} \sum_{\substack{1\leq t <n\\ (t,n)=1}} t^8
 -\frac{428 n}{1440} \sum_{\substack{1\leq t <n\\ (t,n)=1}} t^7
 + \frac{658 n^2}{1440} \sum_{\substack{1\leq t <n\\ (t,n)=1}} t^6 \\
 & - \frac{238 n^3}{1440}\sum_{\substack{1\leq t <n\\ (t,n)=1}} t^5
  - \frac{210 n^4}{1440}\sum_{\substack{1\leq t <n\\ (t,n)=1}} t^4
 + \frac{462 n^5}{1440}\sum_{\substack{1\leq t <n\\ (t,n)=1}} t^3 \\
 & - \frac{154 n^6}{1440}\sum_{\substack{1\leq t <n\\ (t,n)=1}} t^2
 - \frac{90}{1440} \sum_{\substack{1\leq t <n\\ (t,n)=1}} 1.
 \end{split}
 \end{equation}
 Further by the relations in (\ref{primitive-sum}) and (\ref{basic-phi-mu}) we find
 \begin{equation} \label{help 1-7-2}
 \begin{split}
 \sum_{\substack{1\leq t <n\\ (t,n)=1}} t^7 &= \frac{n^8}{8} \psi_{-1}(n)
  + \frac{7 n^6}{12} \psi_{1}(n) - \frac{7 n^4}{24} \psi_{3}(n)
  + \frac{7 n^2}{84}\psi_{5}(n) \\
 \sum_{\substack{1\leq t <n\\ (t,n)=1}} t^8 &= \frac{n^9}{9} \psi_{-1}(n)
  + \frac{2 n^7}{3} \psi_{1}(n) - \frac{7 n^5}{15} \psi_{3}(n)
  + \frac{2 n^3}{9}\psi_{5}(n) - \frac{n}{30} \psi_{7}(n). \\
  \end{split}
 \end{equation}
 Now use (\ref{help 1-1-2}), (\ref{help 1-3-2}), (\ref{help 1-5-2}),  (\ref{help 1-7-1}), and (\ref{help 1-7-2}) to
 conclude the result.

 (b)\quad By Main~Theorem applied to
 the function $f(a,b,x,y)= x^7 y - x^5 y^3$ we have
 \[
 90 \sum_{(a,b,x,y)\in \mathcal{B}'(n)} x^3 y^5
 =\sum_{\substack{1\leq t< n\\ (t,n)=1}} (-n t^7 +11 n^2 t^6-26 n^3 t^5 +30n^4 t^4 -21 n^5 t^3 +7 n^6 t^2).
 \]
 This fact combined with the relations
 (\ref{help 1-1-2}), (\ref{help 1-3-2}), (\ref{help 1-5-2}),  and (\ref{help 1-7-2}) gives
 the result.
\\ \\
 \noindent
 \emph{Proof of Theorem \ref{weight12}.}\
 (a)\quad Considering the function
 $f(a,b,x,y)= 271 x^{11} y -1540 x^9 y^3 + 1584 x^7 y^5$ in Main~Theorem we have
 \[
 \begin{split}
 7560 \sum_{(a,b,x,y)\in \mathcal{B}'(n)} x y^{11} &=
 \sum_{\substack{1\leq t< n\\ (t,n)=1}} (315 t^{12}+ 62 n t^{11}
  - 7271 n^2 t^{10} +27665 n^3 t^9\\
  & -49170 n^4 t^8 + 37158 n^5 t^7 + 6930 n^6 t^6 -33990 n^7 t^5\\
  & + 30855 n^8 t^4 - 14905 n^9 t^3 + 2981 n^{10} t^2 -315).
 \end{split}
 \]
  Combining this fact with the identities (\ref{help 1-1-2}), (\ref{help 1-3-2}), (\ref{help 1-5-2}), (\ref{help 1-7-2})
  along with
 \begin{equation} \label{help 12}
 \begin{split}
 \sum_{\substack{1\leq t <n\\ (t,n)=1}} t^9 &= \frac{n^{10}}{10} \psi_{-1}(n)
  + \frac{3 n^8}{4} \psi_{1}(n) - \frac{7 n^6}{10} \psi_{3}(n) + \frac{n^4}{2}\psi_{5}(n) \\
 & - \frac{3 n^2}{20} \psi_{7}(n) \\
 \sum_{\substack{1\leq t <n\\ (t,n)=1}} t^{10} &= \frac{n^{11}}{11} \psi_{-1}(n)
  + \frac{5 n^9}{6} \psi_{1}(n) - n^7 \psi_{3}(n) + n^5\psi_{5}(n) \\
  &- \frac{n^3}{2} \psi_{7}(n) + \frac{5n}{66} \psi_{9}(n) \\
 \sum_{\substack{1\leq t <n\\ (t,n)=1}} t^{11} &= \frac{n^{12}}{12} \psi_{-1}(n)
  + \frac{11 n^{10}}{12} \psi_{1}(n) - \frac{11 n^8}{8} \psi_{3}(n) + \frac{11 n^6}{6} \psi_{5}(n) \\
  & - \frac{11 n^4}{8} \psi_{7}(n) + \frac{5 n^2}{12}\psi_{9}(n) \\
 \sum_{\substack{1\leq t <n\\ (t,n)=1}} t^{12} &= \frac{n^{13}}{13} \psi_{-1}(n)
   + n^{11} \psi_{1}(n) - \frac{11 n^9}{6} \psi_{3}(n) + \frac{22 n^7}{7}\psi_{5}(n) \\
   & - \frac{33 n^5}{10} \psi_{7}(n)
   + \frac{5 n^3}{3}\psi_{9}(n) - \frac{691 n}{2730} \psi_{11}(n)
 \end{split}
 \end{equation}
 yields the result.

 (b)\quad As to this part, using
 the function $f(a,b,x,y)= -2 x^{11} y +11 x^9 y^3 -9 x^7 y^5$ in Main~Theorem, we find
 \[
 \begin{split}
 180 \sum_{(a,b,x,y)\in \mathcal{B}'(n)} x^3 y^{9} &=
 \sum_{\substack{1\leq t< n\\ (t,n)=1}} (-16n t^{11} +97 n^2 t^{10} - 268 n^3 t^{9} +411 n^4 t^8 -282 n^5 t^7\\
 & - 63 n^6 t^6 + 264 n^7 t^5 -231 n^8 t^4+ 110 n^9 t^3 - 22 n^{10} t^2 ).
 \end{split}
 \]
 Now use the relations (\ref{pre-identity}), (\ref{help 1-1-2}), (\ref{help 1-3-2}), (\ref{help 1-5-2}),
 (\ref{help 1-7-2}), and (\ref{help 12}) to deduce the desired formula.

 (c)\quad Use the function $f(a,b,x,y)= 8 x^{11} y -35 x^9 y^3 +27 x^7 y^5$ in Main~Theorem to obtain
 \[
 \begin{split}
 2520 \sum_{(a,b,x,y)\in \mathcal{B}'(n)} x^5 y^{7} &=
 \sum_{\substack{1\leq t< n\\ (t,n)=1}} (46n t^{11} -253 n^2 t^{10} + 640 n^3 t^{9} -825 n^4 t^8 +174 n^5 t^7\\
 & + 945 n^6 t^6 - 1380 n^7 t^5 +1005 n^8 t^4- 440 n^9 t^3 + 88 n^{10} t^2 )
 \end{split}
 \]
 and argue as before to conclude the result.
 \section{Concluding remarks on integer representations}
 In this section we will show that each of the sums $\sum_{m=1}^{n-1} \sigma_r (m) \sigma_s (n-m)$ and
 $\sum_{m=1}^{n-1} \sigma'_{r,s} (m,n-m)$ count certain representations of positive integers involving radicals.
 \begin{definition}
 For $1<n \in\mathbb{N}$ and $r,s \in\mathbb{N}$ let the functions $L_{r,s}(n)$, $M_{r,s}(n)$,
 $L'_{r,s}(n)$ and $M'_{r,s}(n)$ be defined as follows
 \[
 L_{r,s}(n) = \# \{ (a,b,c,d,x,y)\in \mathbb{N}_{0}^2 \times \mathbb{N}^4:\
  ( \sqrt[r]{a+c}, \sqrt[s]{b+d}, x, y) \in \mathcal{B}(n) \}
 \]
 \[
 \begin{split}
 M_{r,s}(n) = &\# \{ (a,b,c,d,x,y,k,l)\in \mathbb{N}_0^{2}\times\mathbb{N}^6:\
 ( \sqrt[r]{k(a+c)}, \sqrt[s]{l(b+d)},x,y)\in \mathcal{B}(n)\\
 &\quad\quad \text{and\ } (a,c)=(b,d)=1 \}.
 \end{split}
 \]
 \[
 L'_{r,s}(n) = \# \{ (a,b,c,d,x,y)\in \mathbb{N}_{0}^2 \times \mathbb{N}^4:\
  ( \sqrt[r]{a+c}, \sqrt[s]{b+d}, x, y) \in \mathcal{B}'(n) \}
 \]
 \[
 \begin{split}
 M'_{r,s}(n) = &\# \{ (a,b,c,d,x,y,k,l)\in \mathbb{N}_0^{2}\times\mathbb{N}^6:\
 ( \sqrt[r]{k(a+c)}, \sqrt[s]{l(b+d)},x,y)\in \mathcal{B}'(n) \\
 & \text{and\ } (a,c)=(b,d)=1 \}.
 \end{split}
 \]
 \end{definition}
 \begin{theorem} \label{L-M}
 If $1<n \in\mathbb{N}$ and $r,s \in\mathbb{N}$, then
 \[
 (a)\ L_{r,s}(n) = M_{r,s}(n) = \sum_{m=1}^{n-1} \sigma_r (m) \sigma_s (n-m) .
 \]
 \[
 (b)\ L'_{r,s}(n) = M'_{r,s}(n) = \sum_{m=1}^{n-1} \sigma_{r,s} (m,n-m).
 \]
 \end{theorem}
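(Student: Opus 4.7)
The plan is to show that all four quantities $L_{r,s}(n)$, $M_{r,s}(n)$, $L'_{r,s}(n)$, $M'_{r,s}(n)$ reduce via term-by-term counting to the single sum
\[
S_{r,s}(n) = \sum_{(A,B,x,y)\in\mathcal{B}(n)} A^r B^s
\]
(respectively to its primed analogue), and to observe that the convolution sums appearing on the right of parts~(a) and~(b) agree with these too.

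First, for the convolution sums themselves: in part~(a) I would expand $\sigma_r(m) = \sum_{A\mid m} A^r$ and $\sigma_s(n-m) = \sum_{B\mid (n-m)} B^s$ and change variables via $m = Ax$, $n - m = By$; the resulting sum ranges exactly over $(A,B,x,y)\in\mathcal{B}(n)$ and equals $S_{r,s}(n)$. For part~(b), the identity in~(\ref{pre-identity}) directly supplies $\sum_{m=1}^{n-1} \sigma'_{r,s}(m,n-m) = \sum_{(A,B,x,y)\in\mathcal{B}'(n)} A^r B^s$.

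Next, to compute $L_{r,s}(n)$, I would fix $(A,B,x,y)\in\mathcal{B}(n)$ and count the admissible $(a,b,c,d)\in\mathbb{N}_0^2\times\mathbb{N}^2$ satisfying $a+c = A^r$ and $b+d = B^s$. Since $c$ runs through $\{1,\dots,A^r\}$ while $a = A^r - c \geq 0$ is forced, there are $A^r$ pairs $(a,c)$ and by symmetry $B^s$ pairs $(b,d)$, so $L_{r,s}(n) = S_{r,s}(n)$. To compute $M_{r,s}(n)$ I would fix the same $(A,B,x,y)$ and count triples $(k,a,c)$ with $k(a+c)=A^r$, $a\in\mathbb{N}_0$, $c\in\mathbb{N}$, $\gcd(a,c)=1$: these are parametrized by a divisor $k$ of $A^r$ together with a coprime decomposition $a + c = A^r/k$, and~(\ref{composition-phi}) supplies $\phi(A^r/k)$ such decompositions; the identity $\sum_{d\mid A^r}\phi(d) = A^r$ from~(\ref{basic-phi-mu}) then yields $A^r$ triples in total, and symmetrically $B^s$ triples $(l,b,d)$, so $M_{r,s}(n) = S_{r,s}(n)$. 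The primed statements $L'_{r,s}(n)=M'_{r,s}(n) = \sum_{(A,B,x,y)\in\mathcal{B}'(n)} A^r B^s$ follow by the identical counting, restricted to $\mathcal{B}'(n)$ rather than $\mathcal{B}(n)$.

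The whole proof is a bookkeeping exercise rather than a serious obstacle; the only point where I would proceed with care is the asymmetric range $a\in\mathbb{N}_0$ versus $c\in\mathbb{N}$ in the $L$-count, which is what produces $A^r$ pairs (not $A^r+1$) and is exactly what is needed to reproduce the $A^r$ factor coming from $\sigma_r$.
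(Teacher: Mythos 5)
Your proof is correct and takes essentially the same route as the paper: both reduce each of $L_{r,s}$, $M_{r,s}$ and the convolution sum to $\sum_{(A,B,x,y)\in\mathcal{B}(n)}A^rB^s$ (and likewise over $\mathcal{B}'(n)$ for the primed versions), counting the $A^r$ pairs $(a,c)\in\mathbb{N}_0\times\mathbb{N}$ with $a+c=A^r$ for the $L$-count and invoking (\ref{composition-phi}) together with $\sum_{d\mid A^r}\phi(d)=A^r$ for the $M$-count. The only cosmetic difference is that you parametrize the $M$-count by the cofactor $k$ where the paper sums over the divisor $e=A^r/k$ directly.
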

 \begin{proof}
 (a)\quad First note that
 \begin{equation} \label{concluding help}
 \sum_{m=1}^{n-1} \sigma_r (m) \sigma_s (n-m) = \sum_{(a,b,x,y)\in \mathcal{B}'(n)} a^r b^s.
 \end{equation}
 Further we have
 \[
 \begin{split}
 L_{r,s}(n)
 &=
 \sum_{\substack{(a,b,c,d,x,y)\in\mathbb{N}_{0}^2 \times \mathbb{N}^4 \\ ( \sqrt[r]{a+c},\sqrt[s]{b+d},x,y) \in \mathcal{B}(n)}} 1 \\
 &=
 \sum_{(u,v,x,y) \in \mathcal{B}(n)} \Bigl(\sum_{\substack{(a,c)\in \mathbb{N}_0 \times \mathbb{N}\\ a+c=u^r}} 1 \Bigr)
  \Bigl(\sum_{\substack{(b,d)\in \mathbb{N}_0 \times \mathbb{N}\\ b+d=v^s}} 1 \Bigr) 1 \\
 &=
 \sum_{(u,v,x,y) \in \mathcal{B}(n)} u^r v^s
 \end{split}
 \]
 and
 \[
 \begin{split}
 M_{r,s}(n)
 &=
 \sum_{\substack{(a,b,c,d,x,y,k,l)\in\mathbb{N}_0^{2}\times\mathbb{N}^6 \\ (\sqrt[r]{k(a+c)},\sqrt[s]{l(b+d)},x,y)\in\mathcal{B}(n)\\(a,c)=(b,d)=1}} 1 \\
 &=
 \sum_{(u,v,x,y)\in\mathcal{B}(n)} \Bigl(\sum_{e\mid u^r}\sum_{\substack{(a,c)\in\mathbb{N}_0\times\mathbb{N}\\ a+c =e\\ (a,c)=1}} 1\Bigr)
  \Bigl(\sum_{f\mid v^s}\sum_{\substack{(b,d)\in\mathbb{N}_0\times\mathbb{N}\\ b+d =f\\ (b,d)=1}} 1\Bigr) \\
 &=
 \sum_{(u,v,x,y)\in\mathcal{B}(n)} \Bigl(\sum_{e\mid u^r} \phi(e) \Bigr) \Bigl(\sum_{f\mid v^s} \phi(f) \Bigr) \\
 &=
 \sum_{(u,v,x,y)\in\mathcal{B}(n)} u^r v^s,
 \end{split}
 \]
 which with the help of (\ref{concluding help}) completes the proof of part (a).

 (b)\quad This part follows similarly with an application of relation (\ref{pre-identity}) instead of relation
 (\ref{concluding help}).
 \end{proof}
 %
 
%

\begin{thebibliography}{9}
\bibitem{Elbachraoui}
 M. El Bachraoui,
 \emph{Sums of Liouville type over primitive pairs and quadruples and some integer representations},
 Journal Number Theory {\bf 133} (2013), 226-241. (To appear).

\bibitem{Glaisher1}
 J. W. L. Glaisher,
 \emph{On the square of the series in which the coefficients are the sums of the divisors of the exponents},
 Mess. Math. {\bf 14} (1884), 156-163.

\bibitem{Glaisher2}
 J. W. L. Glaisher,
 \emph{On certain sums of products of quantities depending upon the divisors of a number},
 Mess. Math. {\bf 15} (1885), 1-20.

\bibitem{Glaisher3}
 J. W. L. Glaisher,
 \emph{Expressions for the first five powers of the series in which the coefficients are the sums of the divisors of the exponents},
 Mess. Math. {\bf 15} (1885), 33-36.

\bibitem{HOSW}
 J. G. Huard, Z. M. Ou, B. K. Spearman, and K. S. Williams,
 \emph{Elementary evaluation of certain convolution sums involving divisor functions},
 Number Theory for the Millenium {II}, edited by M. A. Bennett, B. C. Berndt, N. Boston, H. G. Diamond,
 A. J. Hilderband, and W. Philipp, A. K. Peters, Natick, Massachuetts, 2002, pp. 229-274.

\bibitem{Lahiri1}
 D. B. Lahiri,
 \emph{On Ramanujan's $\tau(n)$ and the divisor function $\sigma(n)$, {I}},
 Bull. Calcutta Math. Soc. {\bf 38} (1946), 193-206.

\bibitem{Lahiri2}
 D. B. Lahiri,
 \emph{On Ramanujan's $\tau(n)$ and the divisor function $\sigma_k$, {II}},
 Bull. Calcutta Math. Soc. {\bf 39} (1947), 33-52.

\bibitem{MacMahon}
 P. A. MacMahon,
 \emph{Divisors of numbers and their continuations in the theory of partitions},
 Proc. London Math. Soc. (2) {\bf 19} (1920), 75-113.

\bibitem{Melfi}
 G. Melfi,
 \emph{On some modular identities},
 Number theory (eds. K. Gy\"{o}ry, A. Peth\"{o}, and V. S\'{o}z), de Gruyter, Berlin, 1998, pp. 371-382.

\bibitem{Ramanujan1}
 S. Ramanujan,
 \emph{On certain arithmetical functions}, Trans. Cambridge Philos. Soc. {\bf 22} (1916), 159-184.

\bibitem{Royer}
E. Royer,
\emph{Evaluating convolution sums of the divisor function by quasimodular forms},
Int. J. Number Theory {\bf 3} (2007), no. 2, 231-261

%
\bibitem{Williams}
 K. S. Williams,
 \emph{Number Theory in the Spirit of Liouville},
 Cambridge University Press, New York,  2011.

\end{thebibliography}
\end{document}